\documentclass[11pt]{amsart}

\usepackage{amsfonts,amsthm,amsmath,amssymb}
\usepackage{amscd}
\usepackage[comma,sort,compress, numbers]{natbib}

\usepackage{hyperref} 
\hypersetup{ colorlinks=true, linkcolor=blue, citecolor=blue,
  filecolor=blue, urlcolor=blue}

\newcommand{\weak}{\stackrel{w}{\longrightarrow}}
\newcommand{\prob}{\stackrel{P}{\longrightarrow}}

\newcommand{\one}{{\bf 1}}

\newcommand{\reals}{{\mathbb R}}
\newcommand{\bbr}{\reals}

\newcommand{\bbz}{\protect{\mathbb Z}}

\newtheorem{theorem}{Theorem}[section]
\newtheorem{fact}{Fact}[section]
\newtheorem{lemma}{Lemma}[section]

\newtheorem{remark}{Remark}
\newtheorem{question}{Question}
\newtheorem{prop}{Proposition}[section]

\def\ol{\overline}
\def\chs{\citet{chakrabarty:hazra:sarkar:2014}}

\def\Var{{\rm Var}}
\def\E{{\rm E}}
\def\esd{{\rm ESD}}

\DeclareMathOperator{\Tr}{Tr}

\begin{document}

\bibliographystyle{abbrvnat}

\title[Free multiplicative convolution]{Remarks on absolute continuity in the context of free probability and random matrices}

\author[A. Chakrabarty]{Arijit Chakrabarty}\thanks{The research of both authors is supported by their respective INSPIRE grants from the Department of Science and Technology, Government of India}
\address{Theoretical Statistics and Mathematics Unit, Indian Statistical
Institute, New Delhi}
\email{arijit@isid.ac.in}

\author[R. S. Hazra]{Rajat Subhra Hazra}
\address{Theoretical Statistics and Mathematics Unit, Indian Statistical
Institute, Kolkata}
\email{rajatmaths@gmail.com}
\keywords{free multiplicative convolution, absolute continuity, random matrix}

\subjclass[2010]{Primary 60B20; Secondary 46L54, 46L53}

\begin{abstract}
 In this note, we show that the limiting spectral distribution of symmetric random matrices with stationary entries is absolutely continuous under some sufficient conditions. This result is applied to obtain sufficient conditions on a probability measure for its free multiplicative convolution with the semicircle law to be absolutely continuous.
\end{abstract}

\maketitle

\section{Introduction}\label{intro}

For two probability measures $\mu$ and $\nu$ on $\mathbb R$, one can
associate the free additive convolution $\mu\boxplus\nu$. This is defined as the
distribution of $X_\mu+Y_\nu$ where $X_\mu$ and $Y_\nu$ are self-adjoint
variables affiliated to a tracial $W^*$- probability space and are free from
each other. Similarly, for probability measures $\mu$ and $\nu$ on $[0,\infty)$ and $\bbr$ respectively,
the free multiplicative convolution is denoted by $\mu\boxtimes\nu$ and
represents the law of $X_\mu^{1/2}Y_\nu X_\mu^{1/2}$ where $X_\mu$ and $Y_\nu$
are free variables as before. Since $\mu$ is supported on the positive half line, $X_\mu$ is positive. We refer to
\citet{bercovici:voiculescu:1993} and \citet{arizmendi:abreu:2009} for details of these
notions and some related transforms. The questions of absolute continuity of these convolutions, with respect to the Lebesgue measure, are important. For compactly supported and absolutely continuous
measures $\mu$ and $\nu$,
\citet{voiculescu1993} showed that $\mu\boxplus\nu$ is absolutely continuous.
The result was extended to the non-compactly supported case when one of the measures is the semicircle
law by~\citet{biane:1997}. Further regularity properties of additive convolution were studied by~\citet{belinschi2008}.  

Some recent works study regularity properties in the context of free multiplicative convolution ($\mu\boxtimes\nu$), see for example \citet{belinschi2005partially, zhong2013free, perez2012}. Theorem 4.1 of \citet{belinschi2003atoms} gives necessary and sufficient conditions for $\mu\boxtimes\nu$ to have an atom at a particular point. However, \emph{absolute} continuity  is much less understood. The following question is a step in that direction, namely, the multiplicative analogue of the problem addressed in \citet{biane:1997}.

\begin{question}\label{q1}
 Let $\mu$ be any probability measure on $[0,\infty)$, and let $\mu_s$ denote
the semicircle law, defined in \eqref{eq.defmus}. Under what conditions on $\mu$, is $\mu\boxtimes\mu_s$
absolutely continuous? 
\end{question}

Our first result gives a sufficient condition on $\mu$ to  answer Question~\ref{q1}.

\begin{theorem}\label{t1}
 Let $\mu$ be a probability measure on $\bbr$ such that
$\mu([\delta,\infty))=1$ for some $\delta>0$. Assume furthermore that $\mu$ has finite mean.
Then, $\mu\boxtimes\mu_s$ is absolutely continuous.
\end{theorem}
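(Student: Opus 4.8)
The plan is to realize $\mu\boxtimes\mu_s$ as the limiting spectral distribution of an explicit symmetric random matrix with stationary entries and then apply the absolute-continuity criterion for such matrices announced in the abstract. Let $(a_i)_{i\ge1}$ be i.i.d.\ with law $\mu$, let $(g_{ij})_{1\le i\le j}$ be i.i.d.\ standard Gaussians extended symmetrically by $g_{ji}=g_{ij}$ and independent of $(a_i)$, and put $A_n=\mathrm{diag}(a_1,\dots,a_n)$ and $G_n=n^{-1/2}(g_{ij})_{1\le i,j\le n}$, the normalized Gaussian Wigner matrix whose spectral distribution converges to $\mu_s$. Define
\[
M_n=A_n^{1/2}G_nA_n^{1/2},\qquad (M_n)_{ij}=\frac{\sqrt{a_ia_j}}{\sqrt n}\,g_{ij}.
\]
Since $a_i\ge\delta>0$ the square roots are well defined and $M_n$ is symmetric; and because $(a_i)$ and $(g_{ij})$ are both i.i.d., the field $\{\sqrt{a_ia_j}\,g_{ij}\}$ is invariant under the diagonal shift $(i,j)\mapsto(i+k,j+k)$, so its entries are stationary in the sense demanded by the random-matrix theorem.

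Assume first that $\mu$ is compactly supported. By the Glivenko--Cantelli theorem the spectral distribution of $A_n$ converges weakly to $\mu$ almost surely, and since $G_n$ is orthogonally invariant it is asymptotically free from the diagonal $A_n$; hence $\esd(M_n)\to\mu\boxtimes\mu_s$. I would then feed $M_n$ into the absolute-continuity theorem for stationary symmetric matrices. The lower bound $a_i\ge\delta$ is the decisive non-degeneracy input: the variance profile $v(i,j)=a_ia_j$ is bounded below by $\delta^2$, which should prevent the imaginary part of the Stieltjes transform of the self-consistent limiting measure from blowing up as the spectral parameter approaches the real axis, yielding a pointwise density bound. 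The expected outcome is that $\mu\boxtimes\mu_s$ is absolutely continuous with a density bounded by a constant $C$ depending only on $\delta$ and the first moment $\int x\,d\mu$.

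To remove the compactness assumption I would truncate. For $K>\delta$ set $\mu^{(K)}=\mu(\,\cdot\cap[\delta,K])/\mu([\delta,K])$, a compactly supported measure on $[\delta,\infty)$ with $\int x\,d\mu^{(K)}\le\int x\,d\mu<\infty$. The previous step gives $\mu^{(K)}\boxtimes\mu_s$ absolutely continuous with density at most $C(\delta,\int x\,d\mu^{(K)})\le C(\delta,\int x\,d\mu)$, a bound independent of $K$ precisely because $\mu$ has finite mean. Since $\mu^{(K)}\to\mu$ weakly as $K\to\infty$ and $\boxtimes$ is weakly continuous, $\mu^{(K)}\boxtimes\mu_s\to\mu\boxtimes\mu_s$ weakly; and a weak limit of measures obeying a common density bound is itself absolutely continuous with the same bound. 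This yields Theorem~\ref{t1}. Here the finite-mean hypothesis plays exactly one role, namely to keep the density bounds uniform along the truncation.

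The main obstacle is this uniform density bound. Absolute continuity at finite $n$ is not the issue: conditionally on $(a_i)$ the entries $(M_n)_{ij}$, $i\le j$, are independent Gaussians with variances bounded below by $\delta^2/n>0$, so $M_n$ has a non-degenerate Gaussian law on the space of symmetric matrices and its eigenvalues are absolutely continuous. The real work is to show that the density of the \emph{limiting} measure is bounded, with a bound expressed through $\delta$ and $\int x\,d\mu$ alone and uniform both in $n$ and in the truncation level $K$; this is what must come out of analyzing the self-consistent (Dyson-type) equation for the Stieltjes transform of $M_n$, and it is the step where the lower bound $\delta$ must be converted into genuine regularity of $\mu\boxtimes\mu_s$.
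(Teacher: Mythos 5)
Your overall instinct---realize $\mu\boxtimes\mu_s$ as the limiting spectral distribution of a structured random matrix with entries bounded below in an appropriate sense, and then invoke the paper's absolute-continuity criterion---is the right one, but the specific matrix model you choose does not fit that criterion, and the analytic step you defer is precisely the content you would need to supply. Theorem~\ref{t2} applies to matrices $\ol G_N$ built from a \emph{stationary Gaussian} field $(G_{i,j})$ with spectral density $f$, and its hypothesis is a lower bound on $f(x,y)+f(y,x)$. Your field $\sqrt{a_ia_j}\,g_{ij}$ is not Gaussian (it is a product of a Gaussian with an independent non-Gaussian factor), and its covariance is that of white noise: $\E[\sqrt{a_ia_j}\,g_{ij}\,\sqrt{a_{i+u}a_{j+v}}\,g_{i+u,j+v}]=0$ unless $u=v=0$. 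So it carries no spectral density encoding $\mu$, and Theorem~\ref{t2} simply does not see your model. The lower bound $a_i\ge\delta$ on the \emph{variance profile} is not the same as the lower bound on the \emph{spectral density} that the theorem requires. Separately, even if an absolute-continuity statement did apply, your truncation step needs a quantitative density bound $C(\delta,\int x\,d\mu)$ uniform in $K$; Theorem~\ref{t2} asserts only absolute continuity, with no density bound, and you explicitly leave the derivation of such a bound (via the self-consistent equation) as an open obstacle. A weak limit of absolutely continuous measures need not be absolutely continuous without such a uniform bound, so the argument as written does not close.

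The paper's route repairs exactly these two defects. It encodes $\mu$ into a genuine stationary Gaussian model by taking $r$ to be (a rescaling of) the quantile function of $\mu$, so that $2^{3/2}\pi r(U)\sim\mu$ for $U$ uniform on $(-\pi,\pi)$, and setting $f(x,y)=r(x)r(y)$; Fact~\ref{f3} (Theorem 2.4 of \chs) then gives $\nu_f=\mu\boxtimes\mu_s$. The hypothesis $\mu([\delta,\infty))=1$ translates into $f\ge\alpha:=\delta^2/(8\pi^2)$ almost everywhere, and finiteness of the mean is used only to make $f$ integrable---no truncation is ever performed. Lemma~\ref{l1} then peels off the constant part of the spectral density as a free additive semicircular component, $\nu_f=\nu_{f-\alpha}\boxplus\mu_s(\delta^2)$, and Biane's theorem (Fact~\ref{f2}) gives absolute continuity of any free additive convolution with a semicircle law, with no density estimates required. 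If you want to salvage your approach, you would need either to work with the quantile-coupled Gaussian field rather than $A_n^{1/2}G_nA_n^{1/2}$, or to actually carry out the Stieltjes-transform analysis you sketch and extract a density bound depending only on $\delta$ and the first moment; as it stands the proof has a genuine gap at both the modelling and the analytic stage.
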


For stating the next question, we need to introduce a random matrix model. Let
$f$ be any non-negative integrable function on $[-\pi,\pi]^2$. Assume furthermore that $f$ is even, that is,
\begin{equation}\label{eq.even}
 f(-x,-y)=f(x,y),\,-\pi\le x,y\le\pi\,.
\end{equation}
Then, there
exists a mean zero stationary Gaussian process $(G_{i,j}:i,j\in\bbz)$ such that
\begin{equation}\label{intro.eq1}
 \E\left(G_{i,j}G_{i+u,j+v}\right)=\int_{-\pi}^\pi\int_{-\pi}^\pi
e^{\iota(ux+vy)}f(x,y)dx dy,\text{ for all }i,j,u,v\in\bbz\,,
\end{equation}
\eqref{eq.even} ensuring that the right hand side is real.
For $N\ge1$, let $\ol G_N$ be the $N\times N$ matrix defined by
\begin{equation}\label{intro.eq2}
 \ol G_N(i,j):=(G_{i,j}+G_{j,i})/\sqrt N,\,1\le i,j\le N\,.
\end{equation}
Above and elsewhere, for any matrix $H$, $H(i,j)$ denotes its $(i,j)$-th entry.
It has been shown in Theorem 2.1 of \chs~ that there exists a (deterministic) probability measure
$\nu_f$ such that 
\begin{equation}\label{intro.eq3}
 \esd(\ol G_N)\to\nu_f\,,
\end{equation}
weakly in probability, as $N\to\infty$. $\esd$ stands for the empirical spectral distribution
for a symmetric $N\times N$ random matrix $H$, which is a random probability measure on
$\bbr$ defined by 
\[
 \left(\esd(H)\right)(\cdot):=\frac1N\sum_{j=1}^N\delta_{\lambda_j}(\cdot)\,,
\]
where $\lambda_1\le\ldots\le\lambda_N$ are the eigenvalues of $H$, counted with multiplicity.

The second question that this paper attempts to answer is the following.

\begin{question}\label{q2}
 Under what conditions on $f$, is $\nu_f$ absolutely continuous?
\end{question}

The answer is provided by the following result.

\begin{theorem}\label{t2}
 If $${ess\inf}_{(x,y)\in[-\pi,\pi]^2}\left[f(x,y)+f(y,x)\right]>0\,,$$ then $\nu_f$ is absolutely continuous, where 
``$ess\inf$'' denotes the essential infimum.
\end{theorem}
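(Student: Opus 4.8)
The plan is to peel a semicircular (independent-entry Gaussian) component off $\overline G_N$ and then invoke Biane's theorem on free convolution with the semicircle law. First I would reduce to the case of a \emph{symmetric} density. Since $\overline G_N$ is a symmetric Gaussian matrix, its law is determined by the variances of the linear functionals $\sum_{i,j}a_{ij}\overline G_N(i,j)$, and because $\overline G_N(i,j)=\overline G_N(j,i)$ one may always take the array $(a_{ij})$ real and symmetric. For such an array a direct computation from \eqref{intro.eq1}--\eqref{intro.eq2} gives
\[
\Var\Big(\sum_{i,j}a_{ij}\overline G_N(i,j)\Big)=\frac{4}{N}\int_{-\pi}^{\pi}\int_{-\pi}^{\pi}f(x,y)\,|P(x,y)|^2\,dx\,dy,\qquad P(x,y):=\sum_{i,j}a_{ij}e^{\iota(ix+jy)}.
\]
Symmetry of $(a_{ij})$ forces $P(x,y)=P(y,x)$, so $|P(x,y)|^2=|P(y,x)|^2$ and the integrand may be replaced by $f^{\mathrm{sym}}(x,y):=\tfrac12[f(x,y)+f(y,x)]$ times $|P|^2$. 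Hence the law of $\overline G_N$, and therefore $\nu_f$, depends on $f$ only through $f^{\mathrm{sym}}$, which is even, nonnegative, integrable, symmetric in its two arguments, and, by hypothesis, satisfies $f^{\mathrm{sym}}\ge c/2$ almost everywhere, where $c:=\operatorname{ess\,inf}[f(x,y)+f(y,x)]>0$.

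Next I would split the density. Writing $f^{\mathrm{sym}}=\tfrac{c}{2}+g$ with $g:=f^{\mathrm{sym}}-\tfrac{c}{2}\ge 0$, both $\tfrac{c}{2}$ (a constant) and $g$ are admissible even nonnegative integrable densities, so one can realize independent stationary Gaussian fields $G^W$ (with the constant density $c/2$) and $G^R$ (with density $g$) whose sum has density $f^{\mathrm{sym}}$. A constant density has vanishing nonzero Fourier coefficients, so $G^W$ has i.i.d.\ entries and its symmetrization is exactly a scaled GOE matrix $\overline W_N$, whose $\esd$ converges to a semicircle law $\sigma$ of variance proportional to $c>0$. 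By Theorem~2.1 of \chs\ applied to the even density $g$, the symmetrization $\overline R_N$ of $G^R$ satisfies $\esd(\overline R_N)\to\nu_g$ weakly in probability. Combining this with the reduction of the previous paragraph yields the distributional identity $\overline G_N\eid\overline W_N+\overline R_N$ with $\overline W_N$ and $\overline R_N$ independent.

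Finally I would pass to the limit. Because the GOE is invariant under orthogonal conjugation, $\overline W_N$ and the independent $\overline R_N$ should be asymptotically free, whence $\esd(\overline W_N+\overline R_N)\to\sigma\boxplus\nu_g$; together with \eqref{intro.eq3} this identifies $\nu_f=\nu_g\boxplus\sigma$. Since $\sigma$ is a semicircle law of strictly positive variance, \citet{biane:1997} guarantees that $\nu_g\boxplus\sigma$ is absolutely continuous, which is the assertion. I expect the main obstacle to be this limiting step: justifying the asymptotic freeness of the \emph{correlated}-entry matrix $\overline R_N$ from the GOE, and the resulting convergence of the $\esd$ of the sum to the free convolution, under only the weak-in-probability convergence $\esd(\overline R_N)\to\nu_g$ supplied by \chs. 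One must also verify that no compact-support hypothesis on $\nu_g$ is needed, which is precisely where the non-compact version of Biane's theorem becomes essential.
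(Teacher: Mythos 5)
Your overall strategy coincides with the paper's: symmetrize $f$, peel a constant $\alpha>0$ off the spectral density (which corresponds to an independent GOE-type component of $\ol G_N$), identify $\nu_f$ as $\nu_{f-\alpha}\boxplus(\text{semicircle of positive variance})$, and finish with Biane's theorem. The symmetrization step and the splitting of the Gaussian field into independent pieces with densities $c/2$ and $g$ are both correct; your variance computation is in effect the paper's Fact~\ref{f5} in a slightly stronger form (equality of the matrix laws for each $N$, not just of the limits).

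The gap is exactly where you locate it, and it is a genuine one rather than a routine verification. The asymptotic freeness statements you would want to invoke (Voiculescu's theorem, or Proposition 22.32 of Nica--Speicher, which is the paper's Fact~\ref{f1}) require that $\frac1N\Tr(\ol R_N^k)$ converge for every $k$ to the $k$-th moment of a \emph{compactly supported} measure. For a general integrable $g$ this fails: $\nu_g$ need not have compact support, and Theorem~2.1 of \chs\ supplies only weak convergence in probability of $\esd(\ol R_N)$, not moment convergence. So the step $\esd(\ol W_N+\ol R_N)\to\sigma\boxplus\nu_g$ cannot be extracted in one stroke from orthogonal invariance of the GOE. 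The paper closes this gap with a two-stage argument (Lemma~\ref{l1}): it first proves $\nu_{f+\alpha}=\nu_f\boxplus\mu_s(8\pi^2\alpha)$ when $f$ is a non-negative even trigonometric polynomial, in which case $\nu_f$ is compactly supported and the traces converge (Fact~\ref{f6}), so Fact~\ref{f1} applies and the two sides are identified by matching moments; it then extends to general integrable $f$ by approximating $\sqrt f$ by trigonometric polynomials (so that $f_n\ge0$ and $f_n\to f$ in $L^1$) and using the continuity of $f\mapsto\nu_f$ under $L^1$ convergence (Fact~\ref{f4}) together with the weak continuity of $\boxplus$ (Proposition 4.13 of Bercovici--Voiculescu). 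Some such truncation/approximation device is needed to make your limiting step rigorous. Your final worry, by contrast, is not an issue: Corollary 2 of Biane applies to an arbitrary probability measure, so no compact-support hypothesis on $\nu_g$ is required at that stage.
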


While Questions \ref{q1} and \ref{q2} seem unrelated a priori, the reader will notice after seeing the proofs that they are not so. This is because random matrix theory is used as a tool for proving Theorem \ref{t1}, Theorem \ref{t2} being anyway a question about a random matrix. 
It is shown in Proposition~\ref{prop:sum:mult}, which is a consequence of Theorem~\ref{t2}, that if a measure satisfies the conditions of Theorem~\ref{t1} then,
its free multiplicative convolution with a semicircle law is also the free additive convolution of another measure and a dilated semicircle law.
The proofs are compiled in the following section. Many known facts are used, which are collected in Section~\ref{sec: appendix} for the convenience of the reader.

\section{Proofs}\label{sec:proof}

For the proof of the results we shall refer to various known facts which are
listed in the Appendix of this article. One of the main ingredients is the
following fact which follows from Proposition 22.32, page 375 of
\citet{nica:speicher:2006}. The latter result reasserts the seminal discovery in \citet{voiculescu1991limit} that the Wigner matrix is
asymptotically freely independent of a deterministic matrix which has a compactly supported limiting spectral distribution.

\begin{fact}\label{f1}
 Assume that for each $N$, $A_N$ is a $N\times N$ Gaussian Wigner matrix scaled
by $\sqrt N$, that is, $(A_N(i,j):1\le i\le j\le N)$ are i.i.d. normal random
variables with mean zero and variance $1/N$, and $A_N(j,i)=A_N(i,j)$.
 Suppose that $B_N$ is a $N\times N$ random matrix, such that as $N\to\infty$,
\begin{equation}\label{f1.assume}
 \frac1N\Tr(B_N^k)\prob\int_\bbr x^k\mu(dx),\,k\ge1\,,
\end{equation}
for some compactly supported (deterministic) probability measure $\mu$. Furthermore, let the
families $(A_N:N\ge1)$ and $(B_N:N\ge1)$ be independent.
Then, as $N\to\infty$,
\[
 \frac1N\E_{\mathcal F}\Tr\left[(A_N+B_N)^k\right]\prob\int_\bbr
x^k\mu\boxplus\mu_s(dx)\text{ for all }k\ge1\,,
\]
where ${\mathcal F}:=\sigma(B_N:N\ge1)$ and $\E_{\mathcal F}$ denotes the
conditional expectation with respect to $\mathcal F$.
\end{fact}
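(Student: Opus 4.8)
The statement is the version, for \emph{random} $B_N$, of Voiculescu's asymptotic freeness of a Gaussian Wigner matrix and a deterministic matrix, the deterministic case being exactly Proposition 22.32 of \citet{nica:speicher:2006}. My plan is therefore to condition on $\mathcal F$ and reduce to that deterministic statement. The first point to record is that, since $(A_N:N\ge1)$ is independent of $\mathcal F=\sigma(B_N:N\ge1)$, the conditional expectation integrates out only the Gaussian randomness: writing $Y_N^{(k)}:=\frac1N\E_{\mathcal F}\Tr[(A_N+B_N)^k]$, for almost every realization $\omega$ one has $Y_N^{(k)}(\omega)=\frac1N\E_{A_N}\Tr[(A_N+B_N(\omega))^k]$, the expectation now being over the Wigner matrix alone with $B_N(\omega)$ frozen as a deterministic matrix. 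In particular $Y_N^{(k)}$ is $\mathcal F$-measurable and depends on $B_N$ only.

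To upgrade the deterministic convergence to a statement in probability I would use the standard characterisation: a sequence converges in probability to a constant if and only if every subsequence admits a further subsequence converging to that constant almost surely. Fix a subsequence. By \eqref{f1.assume}, along which $\frac1N\Tr(B_N^j)\prob\int_\bbr x^j\mu(dx)$ for each fixed $j\ge1$, a diagonal extraction over the countably many indices $j$ produces a further subsequence along which, almost surely, $\frac1N\Tr(B_N^j)\to\int_\bbr x^j\mu(dx)$ simultaneously for all $j$. Since $\mu$ is compactly supported it is determined by its moments, so on this almost-sure event the deterministic sequence $B_N(\omega)$ has limiting spectral distribution $\mu$, and the hypotheses of Proposition 22.32 of \citet{nica:speicher:2006} are met for the pair $(A_N,B_N(\omega))$.

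Applying that proposition gives, for each such $\omega$ and every $k\ge1$,
\[
 Y_N^{(k)}(\omega)=\frac1N\E_{A_N}\Tr[(A_N+B_N(\omega))^k]\longrightarrow\int_\bbr x^k\,\mu\boxplus\mu_s(dx)
\]
along the chosen subsequence, the expected normalised trace being precisely the in-expectation form of asymptotic freeness delivered by that proposition. As the limit is the same deterministic constant for every subsequence, the characterisation above yields $Y_N^{(k)}\prob\int_\bbr x^k\,\mu\boxplus\mu_s(dx)$, which is the assertion.

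The step requiring the most care is the transfer of hypotheses to Proposition 22.32: one must check that mere convergence of the moments $\frac1N\Tr(B_N^j)$, rather than a uniform bound on the operator norm of $B_N$, suffices for the expected-trace conclusion. This is reassuring because $\frac1N\E\Tr[(A_N+B_N)^k]$ admits a genus expansion whose every term is a polynomial in the normalised traces $\frac1N\Tr(B_N^j)$, $1\le j\le k$, with the non-leading terms carrying negative powers of $N$; along the almost-sure subsequence these traces are bounded, so the corrections vanish and only the genus-zero term---whose value at the moments of $\mu$ is the $k$-th moment of $\mu\boxplus\mu_s$---survives. The remaining points, namely the identification of $\E_{\mathcal F}$ with integration over $A_N$ alone and the passage from moment convergence to the limiting distribution $\mu$, are routine given the independence of the two families and the compact support of $\mu$.
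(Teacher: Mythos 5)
Your argument is correct and follows exactly the route the paper intends: the paper offers no written proof of Fact~\ref{f1} beyond the assertion that it ``follows from Proposition 22.32 of \citet{nica:speicher:2006}'', and your conditioning on $\mathcal F$ together with the subsequence/diagonal-extraction reduction to the deterministic statement is the standard way to make that derivation precise. Your closing observation --- that the genus expansion of $\frac1N\E\Tr[(A_N+B_N)^k]$ is a polynomial in the normalised traces $\frac1N\Tr(B_N^j)$ with negative powers of $N$ on the non-planar terms, so that moment convergence (rather than an operator-norm bound) suffices --- is indeed the one point needing care, and you handle it correctly.
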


We first proceed towards proving Theorem \ref{t2}. The first step in that direction is Lemma \ref{l1} below. However, before stating that, we define a dilated semicircle law $\mu_s(t)$ for all $t>0$. It is a probability measure on $\bbr$ given by

\begin{equation}\label{eq.defmus}
 \left(\mu_s(t)\right)(dx)=\frac{\sqrt{4t-x^2}}{2\pi t}\one(|x|\le2\sqrt t)\,dx,\,x\in\bbr\,.
\end{equation}
In other words, it is the semicircle law with variance $t$.
For $t=1$, it equals the standard semicircle law, that is, $\mu_s\equiv\mu_s(1)$.

\begin{lemma}\label{l1}
 Let $\alpha>0$. Denote
\[
 (f+\alpha)(\cdot,\cdot):=f(\cdot,\cdot)+\alpha\,.
\]
Then,
\begin{equation}\label{l1.claim}
 \nu_{f+\alpha}=\nu_f\boxplus\mu_s(8\pi^2\alpha)\,.
\end{equation}
\end{lemma}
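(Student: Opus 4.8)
The plan is to realize the passage from $f$ to $f+\alpha$ at the level of the underlying Gaussian process as the addition of an independent white noise, and then to read the claim off from asymptotic freeness. Since the covariance in \eqref{intro.eq1} is linear in the spectral density, a process $G^{(\alpha)}$ with spectral density $f+\alpha$ has the same law as $G+W'$, where $W'$ is an independent stationary Gaussian field whose spectral density is the constant $\alpha$. A constant spectral density gives
\[
\E\left(W'_{i,j}W'_{i+u,j+v}\right)=\int_{-\pi}^\pi\int_{-\pi}^\pi e^{\iota(ux+vy)}\alpha\,dx\,dy=4\pi^2\alpha\,\one(u=v=0)\,,
\]
so the $W'_{i,j}$ are i.i.d. $N(0,4\pi^2\alpha)$ and independent of $G$.

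Next I would transfer this decomposition to the matrices. Writing $\ol W_N(i,j):=(W'_{i,j}+W'_{j,i})/\sqrt N$, the symmetrization \eqref{intro.eq2} gives
\[
\ol{G^{(\alpha)}}_N\eid\ol G_N+\ol W_N\,,
\]
with $\ol W_N$ independent of $\ol G_N$. For $i\ne j$ the entry $\ol W_N(i,j)$ has variance $8\pi^2\alpha/N$, while the diagonal has variance $16\pi^2\alpha/N$; since the diagonal carries only $O(1/N)$ mass it does not affect the limiting spectrum. Thus $\ol W_N$ is, up to a negligible diagonal, a Gaussian Wigner matrix with off-diagonal variance $8\pi^2\alpha/N$, that is, $\sqrt{8\pi^2\alpha}$ times a standard scaled Wigner matrix, and hence its empirical spectral distribution converges to $\mu_s(8\pi^2\alpha)$.

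With the decomposition in hand, the conclusion should follow from Fact~\ref{f1}. Taking $B_N=\ol G_N$, whose moments converge to those of $\nu_f$ by \eqref{intro.eq3}, and $A_N$ the standard Wigner piece of $\ol W_N$, the natural dilation of Fact~\ref{f1} yields
\[
\frac1N\E_{\mathcal F}\Tr\left[(\ol G_N+\ol W_N)^k\right]\prob\int_\bbr x^k\,\nu_f\boxplus\mu_s(8\pi^2\alpha)(dx)\,,\quad k\ge1\,,
\]
and comparing this with \eqref{intro.eq3} applied to the spectral density $f+\alpha$ identifies $\nu_{f+\alpha}=\nu_f\boxplus\mu_s(8\pi^2\alpha)$, which is \eqref{l1.claim}.

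The main obstacle is that Fact~\ref{f1} requires the limiting law $\nu_f$ of $B_N$ to be compactly supported, whereas for a merely integrable (possibly unbounded) $f$ this need not be evident. I would handle this by a truncation/approximation argument: first prove the identity for bounded $f$, where the requisite moment bounds and compact support are available, and then remove the boundedness by approximating $f$ in $L^1$ and passing to the limit, using the continuity of free additive convolution and of $t\mapsto\mu_s(t)$ together with the weak-in-probability convergence \eqref{intro.eq3}. Checking that the diagonal discrepancy and the scaling are correctly absorbed into the variance $8\pi^2\alpha$ is the only other point requiring care, but it is routine once the white-noise decomposition is set up.
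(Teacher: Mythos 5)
Your proposal is correct and follows essentially the same route as the paper: realize $f+\alpha$ by adding an independent white-noise Gaussian field, observe that its symmetrization is a Wigner matrix with off-diagonal variance $8\pi^2\alpha/N$, apply the asymptotic freeness statement of Fact~\ref{f1}, and then remove the compact-support restriction by $L^1$-approximation using Fact~\ref{f4} and continuity of $\boxplus$. The only cosmetic difference is that the paper's base case is non-negative even trigonometric polynomials (for which Fact~\ref{f6} supplies the moment convergence and compact support that Fact~\ref{f1} needs), rather than general bounded $f$.
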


\begin{proof} Let us first prove this for the case that $f$ is a trigonometric polynomial, that is, of the form
\begin{equation}\label{eq.trigpol}
 f(x,y)=\sum_{j,k=-n}^na_{j,k}e^{\iota(jx+ky)}\ge0\,,
\end{equation}
for some finite $n$ and real numbers $a_{j,k}$, in addition to satisfying \eqref{eq.even}.
 By Fact \ref{f6}, $\nu_{f+\alpha}$ and $\nu_f$ have compact supports, and hence
so does $\nu_f\boxplus\mu_s(8\pi^2\alpha)$. Therefore, it suffices to check that
\begin{equation}\label{l1.eq1}
 \int x^k\nu_{f+\alpha}(dx)=\int x^k(\nu_f\boxplus\mu_s(8\pi^2\alpha))(dx)\text{ for
all }k\ge1\,.
\end{equation}
Let $(G_{i,j}:i,j\in\bbz)$ be a mean zero stationary Gaussian process satisfying
\eqref{intro.eq1}. Let $(H_{i,j}:i,j\in\bbz)$ be a family of i.i.d.
$N(0,4\pi^2\alpha)$ random variables, independent of $(G_{i,j}:i,j\in\bbz)$. For
$N\ge1$, let $\ol G_N$ be as in \eqref{intro.eq2}, and further define the
$N\times N$ matrices $W_N$ and $Z_N$ by
\begin{eqnarray*}
 W_N(i,j)&:=&(H_{i,j}+H_{j,i})/\sqrt N,\,1\le i,j\le N,\\
 Z_N&:=&\ol G_N+W_N\,.
\end{eqnarray*}
Fact \ref{f6} implies that 
\[
 \frac1N\Tr\left(\ol G_N^k\right)\prob\int x^k\nu_f(dx),\,k\ge1\,,
\]
as $N\to\infty$. This, along with Fact \ref{f1} and the observation that the upper triangular entries of $W_N$ are i.i.d. $N(0,8\pi^2\alpha/N)$, implies that
\begin{equation}\label{l1.eq2}
 \frac1N\E_{\mathcal F}\Tr\left(Z_N^k\right)\prob\int
x^k(\nu_f\boxplus\mu_s(8\pi^2\alpha))(dx),\,k\ge1\,,
\end{equation}
where ${\mathcal F}:=\sigma(G_{i,j}:i,j\in\bbz)$.

It is easy to see that $(G_{i,j}+H_{i,j}:i,j\in\bbz)$ is a stationary mean zero
Gaussian process whose spectral density is $f+\alpha$, and hence Fact \ref{f6}
implies that 
\[
 \E\left[\frac1NE_{\mathcal
F}\Tr\left(Z_N^k\right)\right]=\E\left[\frac1N\Tr\left(Z_N^k\right)\right]
\to\int x^k\nu_{f+\alpha}(dx)\,,
\]
and
\[
 \Var\left[\frac1NE_{\mathcal
F}\Tr\left(Z_N^k\right)\right]\le\Var\left[\frac1N\Tr\left(Z_N^k\right)\right]
\to0\,,
\]
as $N\to\infty$, for all $k\ge1$.
Combining the above two limits and comparing with \eqref{l1.eq2} yields
\eqref{l1.eq1}, and completes the proof of \eqref{l1.claim} when $f$ is a trigonometric polynomial.

Now suppose that $f$ is just a non-negative integrable function satisfying \eqref{eq.even}. By considering the Fourier series of $\sqrt f$, one
can construct non-negative even trigonometric polynomials $f_n$ such that
\[
 f_n\to f\text{ in }L^1\,.
\]
Since \eqref{l1.claim} holds with $f$ replaced by $f_n$, it follows that
\begin{eqnarray*}
 \nu_{f_n+\alpha}&=&\nu_{f_n}\boxplus\mu_s(8\pi^2\alpha)\\
 &\weak&\nu_f\boxplus\mu_s(8\pi^2\alpha)\,,
\end{eqnarray*}
as $n\to\infty$,
the second line following from Fact \ref{f4} combined with Proposition 4.13 of
\citet{bercovici:voiculescu:1993}. Applying Fact \ref{f4} directly to
$\nu_{f_n+\alpha}$ yields
\begin{equation*}
 \nu_{f_n+\alpha}\weak\nu_{f+\alpha}\,,
\end{equation*}
as $n\to\infty$. This establishes the claimed identity and thus completes the proof.
\end{proof}

\begin{proof}[Proof of Theorem \ref{t2}]
 Define
\[
 g(x,y):=\frac12\left[f(x,y)+f(y,x)\right],\,-\pi\le x,y\le\pi\,.
\]
In view of Fact \ref{f5}, it suffices to show that $\nu_g$ is absolutely
continuous. The hypothesis implies that there exists $\alpha>0$ such that
$g\ge\alpha$ almost everywhere on $[-\pi,\pi]^2$. Define
\[
 h(\cdot,\cdot):=g(\cdot,\cdot)-\alpha\,.
\]
Notice that
\[
 \nu_f=\nu_g=\nu_h\boxplus\mu_s(8\pi^2\alpha)\,,
\]
the two equalities following by Fact \ref{f5} and Lemma \ref{l1} respectively. 
Fact \ref{f2} completes the proof.
\end{proof}

For proving Theorem \ref{t1}, we prove the following result which is of some
independent interest. This along with Fact \ref{f2} establishes Theorem
\ref{t1}.

\begin{prop}\label{prop:sum:mult}
 If $\mu$ satisfies the hypothesis of Theorem \ref{t1}, then there exists a
probability measure $\eta$ such that 
\[
 \mu\boxtimes\mu_s=\eta\boxplus\mu_s(\delta^2)\,.
\]
\end{prop}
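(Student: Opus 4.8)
The plan is to realise $\mu\boxtimes\mu_s$ as the limiting spectral distribution of a Gaussian random matrix whose variance profile is bounded below by $\delta^2$, and then to peel off a semicircular summand of variance $\delta^2$ in exactly the way that the proof of Lemma~\ref{l1} peeled off $\mu_s(8\pi^2\alpha)$ by adding a constant $\alpha$ to the spectral density. Here the constant floor of the variance profile plays the role of that additive constant. I would first treat the case where $\mu$ is compactly supported, say $\mu([\delta,R])=1$, and recover the general case by approximation at the end. For the compact case, let $t_1,\dots,t_N\in[\delta,R]$ have empirical distribution converging to $\mu$, let $A_N$ be a Gaussian Wigner matrix scaled by $\sqrt N$ as in Fact~\ref{f1}, and set $D_N:=\mathrm{diag}(t_1,\dots,t_N)$. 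The matrix $D_N^{1/2}A_ND_N^{1/2}$ has $\esd$ converging to $\mu\boxtimes\mu_s$, this being the multiplicative analogue of Fact~\ref{f1} and a standard consequence of the asymptotic freeness of a Wigner matrix and a diagonal matrix with compactly supported limiting $\esd$ (see \citet{voiculescu1991limit,nica:speicher:2006}).

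Since the $(i,j)$ entry of $D_N^{1/2}A_ND_N^{1/2}$ is a centred Gaussian of variance $t_it_j/N\ge\delta^2/N$, I would define
\[
 M_N:=\delta B_N+C_N,
\]
where $B_N$ is a Wigner matrix scaled by $\sqrt N$ and $C_N$ is an independent symmetric Gaussian matrix whose upper-triangular entries are independent with $C_N(i,j)\sim N\bigl(0,(t_it_j-\delta^2)/N\bigr)$. Matching variances shows $M_N\eid D_N^{1/2}A_ND_N^{1/2}$, so $\esd(M_N)\to\mu\boxtimes\mu_s$. Granting for the moment that $\esd(C_N)$ converges in probability to a compactly supported probability measure $\eta$, I would then condition on $\mathcal G:=\sigma(C_N)$ and argue precisely as in Lemma~\ref{l1}: since $\delta B_N$ is a scaled Wigner matrix independent of $C_N$, Fact~\ref{f1} applied with the Wigner matrix dilated by $\delta$ gives $\frac1N\E_{\mathcal G}\Tr(M_N^k)\prob\int x^k(\eta\boxplus\mu_s(\delta^2))(dx)$, while taking expectations and using $\Var\bigl[\frac1N\E_{\mathcal G}\Tr(M_N^k)\bigr]\le\Var\bigl[\frac1N\Tr(M_N^k)\bigr]\to0$ identifies the same moments with those of $\mu\boxtimes\mu_s$. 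This yields $\mu\boxtimes\mu_s=\eta\boxplus\mu_s(\delta^2)$ for compactly supported $\mu$.

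I expect the convergence of $\esd(C_N)$ to be the principal obstacle. Because the variance profile $t_it_j-\delta^2$ is bounded, $C_N$ has bounded operator norm with high probability, so any limit is compactly supported; the limiting moments $\frac1N\Tr(C_N^k)$ can then be computed by the Gaussian genus expansion, the limit being governed by the non-crossing pair partitions weighted according to the profile. Concretely, $\eta$ is the scalar distribution of the operator-valued semicircular element over $L^\infty(\mu)$ with covariance kernel $(x,y)\mapsto xy-\delta^2$, and it is precisely the non-negativity of this kernel (guaranteed by $t_i\ge\delta$) that makes $\eta$ a bona fide probability measure. Existence and convergence would follow from this moment method together with a concentration estimate giving $\Var\bigl[\frac1N\Tr(C_N^k)\bigr]\to0$, in the spirit of Fact~\ref{f6} and the methods of \chs.

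Finally, for a general $\mu$ with $\mu([\delta,\infty))=1$ and finite mean, I would set $\mu_n:=\mu|_{[\delta,n]}/\mu([\delta,n])$, so that $\mu_n\to\mu$ weakly and each $\mu_n$ is compactly supported in $[\delta,n]$. The compact case gives $\mu_n\boxtimes\mu_s=\eta_n\boxplus\mu_s(\delta^2)$, and the left-hand sides converge to $\mu\boxtimes\mu_s$ by continuity of $\boxtimes$ under weak convergence (cf.\ \citet{bercovici:voiculescu:1993}). The family $\{\eta_n\}$ is tight: since mean and variance are additive under $\boxplus$, $\eta_n$ has mean $0$ and variance $\Var(\mu_n\boxtimes\mu_s)-\delta^2$, and a direct free-cumulant computation shows $\int x^2\,d(\mu\boxtimes\mu_s)=\bigl(\int x\,d\mu\bigr)^2$, which is finite exactly by the finite-mean hypothesis; hence $\Var(\mu_n\boxtimes\mu_s)$ stays bounded. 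Extracting a weak limit $\eta$ of $\eta_n$ and using continuity of $\boxplus$ then gives $\eta\boxplus\mu_s(\delta^2)=\mu\boxtimes\mu_s$, completing the proof.
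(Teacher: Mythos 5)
Your route is genuinely different from the paper's, and structurally it is sound: you peel the variance profile $t_it_j$ down by its floor $\delta^2$, whereas the paper performs the same peeling in the spectral-density domain. The paper's proof is much shorter because it outsources the matrix model entirely to \chs: it takes $r$ to be (a rescaled version of) the quantile function of $\mu$, so that $2^{3/2}\pi r(U)\sim\mu$ for $U$ uniform, sets $f(x,y)=r(x)r(y)$, and invokes Fact \ref{f3} to get $\mu\boxtimes\mu_s=\nu_f$ in one stroke. The hypothesis $\mu([\delta,\infty))=1$ translates into $f\ge\alpha:=\delta^2/(8\pi^2)$, and Lemma \ref{l1} (which already handles general integrable $f$, hence non-compactly supported $\mu$, via $L^1$-approximation and Fact \ref{f4}) gives $\nu_f=\nu_{f-\alpha}\boxplus\mu_s(\delta^2)$. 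The finite-mean hypothesis enters only to make $r$ integrable. Your version makes the same mechanism visible at the matrix level --- the rank-one profile $t_it_j$ is exactly the matrix-model meaning of $f=r\otimes r$ --- and your identification of where finiteness of the mean is needed (the computation $\int x^2\,d(\mu\boxtimes\mu_s)=(\int x\,d\mu)^2$, giving tightness of $\{\eta_n\}$) is correct and rather elegant.

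The substantive gap is the one you yourself flag: the convergence of $\esd(C_N)$, in the sense required by Fact \ref{f1} (convergence in probability of all normalized traces of powers to the moments of a single compactly supported deterministic measure), is the entire content of the step and is only sketched. This is a known result for Gaussian matrices with a bounded variance profile (operator-valued semicircular elements in the sense of Shlyakhtenko), but it is not among the facts quoted in the paper, and proving it from scratch --- genus expansion, identification of the limit, concentration of $\frac1N\Tr(C_N^k)$, and the operator-norm bound giving compact support --- is comparable in length to the CHS results the paper cites. In effect you would be reproving a variance-profile analogue of Fact \ref{f6} and of Lemma \ref{l1}; the paper's choice of $f=r\otimes r$ exists precisely to avoid this by staying inside the stationary-Gaussian framework where those results are already available. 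A smaller point: in the non-compact step you invoke continuity of $\mu\mapsto\mu\boxtimes\mu_s$ under weak convergence; since $\mu_s$ is a symmetric measure on $\bbr$ rather than a measure on $[0,\infty)$, this is not literally Proposition 4.13/4.14 of \citet{bercovici:voiculescu:1993} and needs either the symmetric $S$-transform framework of \citet{arizmendi:abreu:2009} or a direct moment/tightness argument; it is true, but the citation should be tightened.
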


\begin{proof}
 Define a function $r$ from $[-\pi,\pi]$ to $\bbr$ by
\[
 r(x):=\begin{cases}
 \frac1{2^{3/2}\pi}\inf\left\{y\in\bbr:\frac{|x|}{\pi}\le
\mu(-\infty,y]\right\},&0<|x|<\pi\,,\\
0,&\text{otherwise}\,.
\end{cases}
\]
Let $U$ be an Uniform$(-\pi,\pi)$ random variable. Clearly, 
\begin{equation}\label{neweq1}
 P(2^{3/2}\pi r(U)\in\cdot)=\mu(\cdot)\,,
\end{equation}
which implies that
\[
 \int_{-\pi}^\pi r(x)dx=2\pi\E[r(U)]=2^{-1/2}\int_0^\infty
x\mu(dx)<\infty\,.
\]
The hypothesis that $\mu(-\infty,\delta)=0$ implies that 
\[
 r(x)\ge\frac\delta{2^{3/2}\pi},\,0<|x|<\pi\,.
\]
Defining 
\[
 f(x,y):=r(x)r(y)\,,
\]
it follows that $f$ is an even (as in \eqref{eq.even}) non-negative integrable function bounded below by
$\alpha$ almost everywhere, where
\[
 \alpha:=\frac{\delta^2}{8\pi^2}\,.
\]
Fact \ref{f3} and \eqref{neweq1} imply that
\begin{eqnarray*}
 \mu\boxtimes\mu_s&=&\nu_f\\
 &=&\nu_{f-\alpha}\boxplus\mu_s(8\pi^2\alpha)\,,
\end{eqnarray*}
the second equality following from Lemma \ref{l1}. Setting
$\eta:=\nu_{f-\alpha}$, this completes the proof.
\end{proof}

\begin{proof}[Proof of Theorem \ref{t1}]
 Follows from Proposition \ref{prop:sum:mult} and Fact \ref{f2}.
\end{proof}

\begin{remark}
 Theorem~\ref{t1} is similar in spirit to Theorem 4.1 of \cite{belinschi2003atoms}.
\end{remark}
\begin{remark}
Although finiteness of the mean of $\mu$ is used in proving Proposition~\ref{prop:sum:mult} and thereby Theorem~\ref{t1}, the authors conjecture that the results are true without that assumption. 
\end{remark}

\section{Appendix}\label{sec: appendix} 
In this section, we collect the various facts that have been used in the proofs in Section \ref{sec:proof}.

The following fact is Corollary 2 of \citet{biane:1997}.

\begin{fact}\label{f2}
For any probability measure $\mu$, $\mu\boxplus\mu_s$ is absolutely continuous
with respect to the Lebesgue measure.
\end{fact}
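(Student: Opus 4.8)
The plan is to follow the analysis of the free heat flow, exploiting the subordination property that is special to convolution with a semicircle law. Write $G_\mu(z)=\int_\bbr (z-x)^{-1}\mu(dx)$ for the Cauchy transform, which maps the open upper half-plane $\bbc^+$ into the lower half-plane. Since the semicircle law of variance $t$ has $R$-transform $R(z)=tz$, the additivity $R_{\mu\boxplus\mu_s(t)}=R_\mu+tz$ translates, after passing to the functional inverses $K=R+1/z$ of the Cauchy transforms, into $K_{\nu_t}(z)=K_\mu(z)+tz$, where $\nu_t:=\mu\boxplus\mu_s(t)$. First I would turn this into a subordination relation: setting $\omega_t(z):=z-tG_{\nu_t}(z)$, one checks that $G_{\nu_t}(z)=G_\mu(\omega_t(z))$, so that $\omega_t$ is the inverse of the map $F_t(z):=z+tG_\mu(z)$ wherever the latter is invertible. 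It suffices to treat $t=1$, since $\mu_s=\mu_s(1)$.

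Next I would identify the region on which $F_t$ is well behaved. A direct computation gives $\mathrm{Im}\,F_t(x+iy)=y\bigl(1-t\int_\bbr \tfrac{\mu(du)}{(x-u)^2+y^2}\bigr)$, so the set $\Omega_t:=\{x+iy\in\bbc^+ : \int_\bbr \tfrac{\mu(du)}{(x-u)^2+y^2}<1/t\}$ is exactly where $F_t$ takes values in $\bbc^+$. For fixed $x$ the quantity $\int_\bbr \tfrac{\mu(du)}{(x-u)^2+y^2}$ is strictly decreasing in $y>0$ and tends to $0$, so there is a unique $v_t(x)\ge0$ at which it equals $1/t$ (with $v_t(x)=0$ when the limiting value as $y\to0$ is already at most $1/t$); the boundary of $\Omega_t$ inside $\bbc^+$ is then the graph $\{x+iv_t(x)\}$. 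The analytic core of the argument is to prove that $F_t$ restricts to a conformal bijection of $\Omega_t$ onto $\bbc^+$ with a homeomorphic boundary correspondence; this is what makes $\omega_t$ a single-valued map $\bbc^+\to\Omega_t$ that extends continuously up to $\bbr$.

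With the bijection in hand I would recover $\nu_t$ by Stieltjes inversion. On the boundary graph one has $\mathrm{Im}\,F_t(x+iv_t(x))=0$, so $F_t$ sends it to the real point $\psi_t(x):=x+t\,\mathrm{Re}\,G_\mu(x+iv_t(x))$, and $\psi_t$ turns out to be a homeomorphism of $\bbr$. Letting $\zeta\to\psi_t(x)$ from $\bbc^+$ in the subordination relation forces $\omega_t(\zeta)\to x+iv_t(x)$, whence $G_{\nu_t}(\psi_t(x)^+)=G_\mu(x+iv_t(x))$. Taking imaginary parts and using $\int_\bbr \tfrac{\mu(du)}{(x-u)^2+v_t(x)^2}=1/t$ on the graph yields the explicit continuous density $p_t(\psi_t(x))=-\tfrac1\pi\mathrm{Im}\,G_\mu(x+iv_t(x))=\tfrac{v_t(x)}{\pi t}$. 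Since $\nu_t$ is then given by an honest density, absolute continuity follows, and specializing to $t=1$ proves the fact.

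The main obstacle is the conformal-bijection step: establishing that $F_t$ is injective on $\Omega_t$ and surjective onto $\bbc^+$, together with enough boundary regularity (continuity of $v_t$, hence of $\omega_t$ up to $\bbr$) to legitimize the pointwise Stieltjes inversion and to guarantee that no singular part survives. Injectivity and the boundary correspondence are where the real work lies: one natural approach is an argument-principle count for $F_t$ along $\partial\Omega_t$, combined with the monotonicity in $y$ noted above to control the fibers, after which continuity of $v_t$ (one can in fact extract Hölder regularity) propagates to $\omega_t$ and upgrades the inversion from a weak to a pointwise statement. Everything else—the $R$-transform identity, the computation of $\mathrm{Im}\,F_t$, and the final density formula—is routine once this structural fact is available.
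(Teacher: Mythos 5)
The paper does not prove this fact at all: it imports it verbatim as Corollary~2 of \citet{biane:1997}, and your sketch is precisely Biane's subordination argument from that source --- the region $\Omega_t$, the boundary graph $v_t$, the homeomorphism $\psi_t$, and the density formula $p_t(\psi_t(x))=v_t(x)/(\pi t)$ all appear there in essentially the form you describe. Your outline is correct, and the single step you flag as the real work --- that $F_t$ is a conformal bijection of $\Omega_t$ onto $\bbc^+$ with continuous boundary correspondence --- is exactly the content of Biane's Lemmas~2 and~3.
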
 

The remaining facts are all quoted from \chs.

\begin{fact}[Theorem 2.4, \chs]\label{f3}
 Let $r$ be a non-negative integrable even function defined on $[-\pi,\pi]$, and
\[
 f(x,y):=r(x)r(y),\,-\pi\le x,y\le\pi\,.
\]
Then, 
\[
 \nu_f=\mu_r\boxtimes\mu_s\,,
\]
where $\nu_f$ is as in \eqref{intro.eq3}, $\mu_r$ is the law of $2^{3/2}\pi
r(U)$, and $U$ is a Uniform$(-\pi,\pi)$ random variable.
\end{fact}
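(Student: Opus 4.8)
The plan is to realize the symmetrized matrix $\ol G_N$ explicitly as a deterministic Toeplitz matrix conjugating a Gaussian Wigner matrix, and then read off the limit from Szeg\H{o}'s theorem together with Voiculescu's asymptotic freeness. The starting observation is that the factorization $f(x,y)=r(x)r(y)$ makes the covariance \eqref{intro.eq1} separable: writing $c_u:=\int_{-\pi}^\pi e^{\iota ux}r(x)\,dx$ one has $\E(G_{i,j}G_{i+u,j+v})=c_uc_v$. I would therefore represent the field as a separable moving average $G_{i,j}=\sum_{u,v}a_ua_v\,\xi_{i+u,j+v}$, where $(\xi_{p,q})$ are i.i.d.\ $N(0,1)$ and $(a_u)$ are the (even, real) Fourier coefficients of $\sqrt r$, normalized so that $\sum_u a_ua_{u-p}=c_p$; equivalently $G=R\xi R$ with $R$ the symmetric Toeplitz operator whose symbol is $A(x)=\sqrt{2\pi r(x)}$, the factor $2\pi$ coming from the un-normalized integral in \eqref{intro.eq1} and the symmetry of $R$ from the evenness of $r$. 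Symmetrizing then gives $\ol G_N=R_N A_N R_N+E_N$, where $R_N$ is the $N\times N$ truncation of $R$ (positive semidefinite, since its symbol is nonnegative), $A_N:=(\xi_N+\xi_N^{\top})/\sqrt N$ is a Gaussian Wigner matrix with off-diagonal variance $2/N$ so that $\esd(A_N)\to\mu_s(2)$, and $E_N$ is a boundary correction.

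First I would treat the case in which $\sqrt r$ is an even trigonometric polynomial. Then $R$ is banded, so $E_N$ has rank bounded independently of $N$ and is negligible for the spectral distribution by the rank inequality. By Szeg\H{o}'s theorem $\esd(R_N)$ converges to the law of $\sqrt{2\pi r(X)}$ with $X$ uniform on $[-\pi,\pi]$, hence $\esd(R_N^2)$ converges to the law of $2\pi r(X)$. Since $R_N$ is deterministic with compactly supported limit and $A_N$ is a Gaussian Wigner matrix, the asymptotic freeness of Wigner and deterministic matrices (the principle underlying Fact~\ref{f1}) applies, and the standard identification of the limit of $D^{1/2}WD^{1/2}$, with $D=R_N^2$, gives
\[
 \esd(R_NA_NR_N)\to\big(\mathrm{law}\,(2\pi r(X))\big)\boxtimes\mu_s(2)\,.
\]

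It then remains to match the constants. Using $\mu_s(2)=\sqrt2\cdot\mu_s$ and the dilation behaviour $(\mathrm{law}\,(cP))\boxtimes\nu=c\cdot\big((\mathrm{law}\,P)\boxtimes\nu\big)$ of free multiplicative convolution (for $c>0$ and $P\ge0$), the limit rewrites as $\big(\mathrm{law}\,(2^{3/2}\pi r(X))\big)\boxtimes\mu_s=\mu_r\boxtimes\mu_s$; here $2^{3/2}\pi=\sqrt2\cdot 2\pi$ absorbs exactly the Szeg\H{o} normalization and the variance-$2$ scaling produced by the symmetrization $\xi+\xi^{\top}$. This proves $\nu_f=\mu_r\boxtimes\mu_s$ when $\sqrt r$ is a trigonometric polynomial. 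For a general non-negative integrable even $r$, I would approximate $\sqrt r$ in $L^2[-\pi,\pi]$ by even trigonometric polynomials $p_n$ and set $r_n:=p_n^2$; then $\nu_{f_n}\weak\nu_f$ by the weak-continuity argument already used in the proof of Lemma~\ref{l1} (Fact~\ref{f4}), while $\mu_{r_n}\boxtimes\mu_s\weak\mu_r\boxtimes\mu_s$ by continuity of $\boxtimes$, which yields the identity in the limit.

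The hard part will be the technical control hidden in the first two steps: showing that the boundary correction $E_N$ is negligible for the spectral distribution, and that Szeg\H{o}'s theorem and asymptotic freeness may legitimately be invoked. For bounded (in particular polynomial) symbols both are routine, but when $r$ is unbounded $R$ is an unbounded Toeplitz operator and these tools apply only after truncation, so the genuinely delicate point is the interchange of the limits $N\to\infty$ and $n\to\infty$ — that is, making the approximation in the last paragraph uniform enough to pass from the polynomial case to arbitrary non-negative integrable $r$.
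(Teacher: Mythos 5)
The paper does not actually prove this statement: Fact~\ref{f3} is quoted verbatim from Theorem~2.4 of \chs, and the appendix supplies only the citation, so there is no internal proof to compare yours against. Judged on its own terms, your reconstruction is sound and is, in spirit, the route of the cited reference: the separable covariance $c_uc_v$ does yield the moving-average representation $G=R\xi R$ with $R$ the symmetric Toeplitz operator of symbol $\sqrt{2\pi r}$; symmetrizing gives $R_NA_NR_N$ up to a boundary term of rank $O(1)$ when the symbol is a polynomial; and Szeg\H{o}'s theorem plus asymptotic freeness identify the limit as $\bigl(\mathrm{law}(2\pi r(X))\bigr)\boxtimes\mu_s(2)$. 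Your bookkeeping of the constant $2^{3/2}\pi$ checks out: both sides have second moment $2\bigl(\int_{-\pi}^{\pi}r\bigr)^2$.

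Two points to tighten. First, in the approximation step you take $r_n=p_n^2$ with $p_n$ an even trigonometric polynomial approximating $\sqrt r$ in $L^2$; the Toeplitz symbol for $r_n$ is then $\sqrt{2\pi}\,p_n$, which may change sign, so $R_N$ need not be positive semidefinite and $R_NA_NR_N$ is not literally $D^{1/2}WD^{1/2}$. This is harmless because traciality gives $\tau\bigl((twt)^k\bigr)=\tau\bigl((t^2w)^k\bigr)$, so the limit depends only on the law of $t^2=2\pi r_n(X)$, but it should be said. Second, the ``delicate interchange of limits'' you flag at the end is not actually present: for each fixed $n$ the identity $\nu_{f_n}=\mu_{r_n}\boxtimes\mu_s$ is an identity between two fixed probability measures (the $N\to\infty$ limit having already been taken), and one then lets $n\to\infty$ in that identity, using Fact~\ref{f4} on the left and, on the right, $\mu_{r_n}\weak\mu_r$ (which follows from $L^1$, hence in-measure, convergence of $r_n$) together with the weak continuity of $\boxtimes$ from \citet{bercovici:voiculescu:1993}, valid here since $\mu_r\ne\delta_0$ unless $r=0$ a.e., a trivial case. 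The genuinely nontrivial input is therefore Fact~\ref{f4} itself, the $L^1$-continuity of $f\mapsto\nu_f$, which this paper likewise only quotes from \chs; a from-scratch proof of Theorem~2.4 would need to establish that as well.
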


\begin{fact}[Lemma 3.3, \chs]\label{f4}
 Suppose that for all $1\le n\le\infty$, $g_n$ is a non-negative, integrable and
even function on $[-\pi,\pi]^2$. By even, it is meant that $g_n(-x,-y)=g_n(x,y)$
for all $x,y$. If
\[
 g_n\to g_\infty\text{ in }L^1\text{ as }n\to\infty\,,
\]
then
\[
 \nu_{g_n}\weak\nu_{g_\infty}\,.
\]
\end{fact}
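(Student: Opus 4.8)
The plan is to prove this by a coupling argument at the level of the random matrices that realize $\nu_{g_n}$ and $\nu_{g_\infty}$, rather than by the method of moments (which would force us to worry about moment determinacy and tails). The engine of the proof is the elementary inequality $|\sqrt a-\sqrt b|^2\le|a-b|$ for $a,b\ge0$: integrating it shows that the $L^1$ hypothesis $g_n\to g_\infty$ delivers the \emph{$L^2$} statement $\|\sqrt{g_n}-\sqrt{g_\infty}\|_2^2\le\|g_n-g_\infty\|_1\to0$, which is exactly what a Gaussian coupling can exploit. Throughout, recall that $\nu_g$ is defined via \eqref{intro.eq3} as the weak-in-probability limit of $\esd(\ol G_N)$, where $\ol G_N$ is built as in \eqref{intro.eq2} from a stationary even Gaussian field with spectral density $g$.

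First I would realize the two fields on one probability space. Using the spectral representation, I would write the field of density $g$ as $G_{i,j}=\int_{[-\pi,\pi]^2}e^{\iota(ix+jy)}\sqrt{g(x,y)}\,Z(dx,dy)$ for a single complex Gaussian white noise $Z$ with $\E|Z(dx\,dy)|^2=dx\,dy$ and the reality constraint $Z(-A)=\overline{Z(A)}$; evenness of $g$ makes $\sqrt g$ even, so this constraint is preserved and $G$ is genuinely real. Feeding $\sqrt{g_n}$ and $\sqrt{g_\infty}$ into the \emph{same} $Z$ couples the two fields without altering either marginal law — hence without disturbing $\nu_{g_n}$ or $\nu_{g_\infty}$. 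A direct second-moment computation then gives, for the difference field $D_{i,j}:=G^{(n)}_{i,j}-G^{(\infty)}_{i,j}$, the exact identity $\E[D_{i,j}^2]=\int(\sqrt{g_n}-\sqrt{g_\infty})^2=\|\sqrt{g_n}-\sqrt{g_\infty}\|_2^2$ for every $i,j$.

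Next I would control the difference of the symmetrized matrices $\Delta_N:=\ol G^{(n)}_N-\ol G^{(\infty)}_N$ in mean Hilbert–Schmidt norm. Since $\Delta_N$ is symmetric with entries $(D_{i,j}+D_{j,i})/\sqrt N$, one has $\frac1N\Tr(\Delta_N^2)=\frac1{N^2}\sum_{i,j}(D_{i,j}+D_{j,i})^2$, whence stationarity and the variance bound above yield $\E\bigl[\frac1N\Tr(\Delta_N^2)\bigr]\le 4\|g_n-g_\infty\|_1$ \emph{uniformly in} $N$. Plugging this into the standard estimate $L^3(\esd(A),\esd(B))\le\frac1N\Tr((A-B)^2)$ relating the Lévy distance to the Hilbert–Schmidt norm of symmetric matrices, and applying Jensen's inequality to the concave map $t\mapsto t^{1/3}$, gives $\E[L(\esd(\ol G^{(n)}_N),\esd(\ol G^{(\infty)}_N))]\le(4\|g_n-g_\infty\|_1)^{1/3}$ for every $N$.

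Finally I would pass to the limit. Because the Lévy metric is bounded and continuous for weak convergence, and $\esd(\ol G^{(n)}_N)\to\nu_{g_n}$, $\esd(\ol G^{(\infty)}_N)\to\nu_{g_\infty}$ weakly in probability as $N\to\infty$, the triangle inequality inside the expectation combined with bounded convergence produces the deterministic bound $L(\nu_{g_n},\nu_{g_\infty})\le(4\|g_n-g_\infty\|_1)^{1/3}$; letting $n\to\infty$ then forces $\nu_{g_n}\weak\nu_{g_\infty}$. The step I expect to be the main obstacle is the coupling itself — setting up the spectral representation with the correct reality conditions so that a single white noise truly couples the two \emph{real} fields, and checking that this coupling leaves the marginal laws defining $\nu_{g_n}$ and $\nu_{g_\infty}$ intact. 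Everything downstream (the Hilbert–Schmidt computation, the inequality $|\sqrt a-\sqrt b|^2\le|a-b|$ that converts the $L^1$ hypothesis into a usable $L^2$ bound, and the metric bookkeeping) is routine.
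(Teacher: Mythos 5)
Your proof is correct, and it is essentially the same argument as the one behind this statement in \chs\ (the present paper only quotes the fact, as Lemma 3.3 there, without reproving it): couple the two stationary Gaussian fields through a common spectral white noise, convert the $L^1$ hypothesis into the $L^2$ bound $\|\sqrt{g_n}-\sqrt{g_\infty}\|_2^2\le\|g_n-g_\infty\|_1$ via $|\sqrt a-\sqrt b|^2\le|a-b|$, and transfer this to the empirical spectral distributions uniformly in $N$ using the standard Lévy--Hilbert--Schmidt inequality $L^3(\esd(A),\esd(B))\le\frac1N\Tr\left((A-B)^2\right)$ before letting $N\to\infty$ and then $n\to\infty$. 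The details you flag as delicate (the Hermitian symmetry $Z(-A)=\overline{Z(A)}$ making the coupled fields real with the correct marginal laws, and the passage from weak convergence in probability to $\E\,L(\esd(\ol G_N),\nu_{g})\to0$ via boundedness of the Lévy metric) all check out.
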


\begin{fact}[Lemma 3.5, \chs]\label{f5}
 If $f$ is a non-negative integrable even function on $[-\pi,\pi]^2$, and
\[
 g(x,y):=\frac12\left[f(x,y)+f(y,x)\right]\,,
\]
then
\[
 \nu_f=\nu_g\,.
\]
\end{fact}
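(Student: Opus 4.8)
The plan is to exploit the fact that the matrix $\ol G_N$ of \eqref{intro.eq2} is built by symmetrizing the stationary field over the swap $(i,j)\mapsto(j,i)$, so that only the part of $f$ which is symmetric in its two arguments can influence the law of $\ol G_N$, and hence $\nu_f$. I would make this precise by showing that the Gaussian matrix associated with $f$ and the one associated with $g$ are equal in distribution for every $N$.

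First I would let $(G_{i,j})$ and $(\tilde G_{i,j})$ be the mean zero stationary Gaussian processes with spectral densities $f$ and $g$ respectively; since $f$ is even, $g$ is again non-negative, integrable and even (as in \eqref{eq.even}), so the measure $\nu_g$ is well defined by the construction \eqref{intro.eq3}. Writing $c_f(u,v):=\int_{-\pi}^\pi\int_{-\pi}^\pi e^{\iota(ux+vy)}f(x,y)\,dx\,dy$, relation \eqref{intro.eq1} gives $\E[G_{a,b}G_{c,d}]=c_f(c-a,d-b)$, and the change of variables $x\leftrightarrow y$ shows that the analogous coefficients for $g$ satisfy $c_g(u,v)=\tfrac12\bigl[c_f(u,v)+c_f(v,u)\bigr]$.

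Next I would compute $\E\bigl[\ol G_N(i,j)\,\ol G_N(k,l)\bigr]$. Expanding $(G_{i,j}+G_{j,i})(G_{k,l}+G_{l,k})$ yields, up to the factor $1/N$, the four coefficients $c_f(k-i,l-j)$, $c_f(l-i,k-j)$, $c_f(k-j,l-i)$ and $c_f(l-j,k-i)$. The crucial observation is that this collection is closed under swapping the two arguments of $c_f$: the first and last form a swapped pair, and so do the middle two. Therefore, carrying out the same expansion for $\tilde G$ and using $c_g(u,v)=\tfrac12[c_f(u,v)+c_f(v,u)]$ merely redistributes these same four coefficients with equal weights, and one obtains $\E[\ol{\tilde G}_N(i,j)\,\ol{\tilde G}_N(k,l)]=\E[\ol G_N(i,j)\,\ol G_N(k,l)]$ for all indices.

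Since both $(\ol G_N(i,j))_{i,j}$ and $(\ol{\tilde G}_N(i,j))_{i,j}$ are jointly Gaussian and mean zero, the equality of covariances from the previous step gives $\ol G_N\eid\ol{\tilde G}_N$ for every $N$, whence $\esd(\ol G_N)\eid\esd(\ol{\tilde G}_N)$. Letting $N\to\infty$ in \eqref{intro.eq3} then forces the two deterministic limits to coincide, that is $\nu_f=\nu_g$. The only genuine work is the bookkeeping in the covariance expansion, which I expect to be the main (if minor) obstacle; conceptually the content is simply that the matrix-level symmetrization annihilates the antisymmetric part of the spectral density.
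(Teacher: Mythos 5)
Your proof is correct: the four covariance terms in $\E\bigl[\ol G_N(i,j)\,\ol G_N(k,l)\bigr]$, namely $c_f(k-i,l-j)$, $c_f(l-i,k-j)$, $c_f(k-j,l-i)$ and $c_f(l-j,k-i)$, do pair off under the swap of arguments, so replacing $c_f$ by $c_g(u,v)=\tfrac12[c_f(u,v)+c_f(v,u)]$ leaves the sum unchanged; since both symmetrized matrices are centered and jointly Gaussian, they coincide in law for every $N$, and the deterministic limits in \eqref{intro.eq3} must agree. Note that the present paper does not prove Fact~\ref{f5} --- it is quoted without proof from Lemma~3.5 of \chs{} --- but your covariance bookkeeping is the natural argument for it and is complete as written.
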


The following fact has been proved in the course of proving Proposition 3.1 of
\chs; see (3.16) and (3.17) therein.

\begin{fact}\label{f6}
 Let $f$ be a non-negative even trigonometric polynomial on $[-\pi,\pi]^2$ as in \eqref{eq.trigpol}.
Let the matrix $\ol G_N$ be constructed as in \eqref{intro.eq2} using the random
variables $(G_{i,j})$ which are as in \eqref{intro.eq1}. Then, $\nu_f$ has
compact support, and for all $k\ge1$,
\begin{eqnarray*}
 \lim_{N\to\infty}\E\left[\frac1N\Tr(\ol G_N^k)\right]&=&\int_\bbr
x^k\nu_f(dx)\,,\\
 \text{and }\lim_{N\to\infty}\Var\left[\frac1N\Tr(\ol G_N^k)\right]&=&0\,.
\end{eqnarray*}
\end{fact}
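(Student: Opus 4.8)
The plan is to prove all three assertions by the method of moments, reducing everything to a combinatorial expansion of $\frac1N\Tr(\ol G_N^k)$ and exploiting that, when $f$ is a trigonometric polynomial of the form \eqref{eq.trigpol}, the covariance of the underlying Gaussian field has finite range. Writing $c(u,v):=\E(G_{i,j}G_{i+u,j+v})$, the defining relation \eqref{intro.eq1} together with \eqref{eq.trigpol} gives $c(u,v)=(2\pi)^2a_{-u,-v}$, so that $c(u,v)=0$ whenever $|u|>n$ or $|v|>n$. This finite-range property is the structural fact that drives the entire argument, and in particular makes all the offset sums below absolutely convergent, with $\sum_{u,v}|c(u,v)|\le K<\infty$.

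First I would expand, reading indices cyclically with $i_{k+1}=i_1$,
\[
\frac1N\Tr(\ol G_N^k)=\frac1{N^{1+k/2}}\sum_{i_1,\dots,i_k=1}^N\prod_{l=1}^k\left(G_{i_l,i_{l+1}}+G_{i_{l+1},i_l}\right),
\]
and take expectations. Since the $G$'s are jointly centered Gaussian, Wick's (Isserlis') formula writes $\E\prod_{l=1}^k(\cdots)$ as a sum over pair partitions of the $k$ factors, each pair contributing one covariance $c(\cdot,\cdot)$; every odd moment vanishes identically, consistent with the symmetry of $\nu_f$. For even $k=2m$ I would group the sum over the index tuple together with a pairing $\pi$ and count degrees of freedom. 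As in the semicircle computation, a pairing survives the normalization $N^{1+k/2}$ only when the associated walk on $\{1,\dots,N\}$ is tree-like, leaving exactly $m+1$ free indices; crossing and higher-genus pairings leave fewer free indices and are $O(N^{-1})$. The one genuine difference from the independent case is that finite range allows two paired edges whose endpoints differ by a \emph{bounded} offset rather than coinciding exactly; summing $c$ over these bounded offsets is absolutely convergent and attaches an $f$-dependent weight to each non-crossing pairing. This identifies $\lim_N\E[\frac1N\Tr(\ol G_N^k)]$ with a finite constant $m_k$. I expect the main obstacle to be precisely this degrees-of-freedom bookkeeping: verifying that the only leading pairings are the tree-like (non-crossing) ones while simultaneously tracking the finite-range offset sums that turn the semicircle weights into the $f$-dependent ones.

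Compact support then falls out of the same count. Each of the at most $C_m\le 4^m$ surviving non-crossing pairings carries a weight bounded by $(\sum_{u,v}|c(u,v)|)^m\le K^m$, whence $m_{2m}\le(4K)^m$ and $\limsup_m m_{2m}^{1/2m}<\infty$. By the moment criterion the sequence $(m_k)$ determines a unique compactly supported probability measure, which by the convergence \eqref{intro.eq3} must be $\nu_f$; consequently $\lim_N\E[\frac1N\Tr(\ol G_N^k)]=\int_\bbr x^k\,\nu_f(dx)$, proving the first two claims simultaneously.

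For the vanishing of the variance I would write
\[
\Var\left[\frac1N\Tr(\ol G_N^k)\right]=\frac1{N^2}\left(\E\left[\left(\Tr\ol G_N^k\right)^2\right]-\left(\E\,\Tr\ol G_N^k\right)^2\right)
\]
and expand $(\Tr\ol G_N^k)^2$ as a double sum over two closed walks $\mathbf i$ and $\mathbf j$. Applying Wick to the $2k$ factors, the pairings split into those pairing entirely within $\mathbf i$ and within $\mathbf j$ separately, which sum to exactly $(\E\,\Tr\ol G_N^k)^2$ and cancel, and the connected ones, in which at least one pair bridges a factor of $\mathbf i$ to a factor of $\mathbf j$. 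A connected pairing fuses the two tree-like walks, and the finite-range constraint forces the bridging indices into bounded offsets, so the number of free indices drops from the disconnected value $2(m+1)$ to at most $2m+1$. Hence the connected contribution to $\E[(\Tr\ol G_N^k)^2]$ is $O(N^{2m+1})$, which against the factor $N^{-k}=N^{-2m}$ inside $\Tr\ol G_N^k$ gives $\Var[\Tr\ol G_N^k]=O(N)$ and therefore $\Var[\frac1N\Tr(\ol G_N^k)]=O(N^{-1})\to0$. This establishes the third claim and completes the proof.
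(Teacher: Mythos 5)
Your proposal is correct and follows essentially the same route as the proof the paper points to: Fact \ref{f6} is not proved internally but quoted from displays (3.16)--(3.17) in the proof of Proposition 3.1 of \citet{chakrabarty:hazra:sarkar:2014}, which is precisely the moment-method argument you reconstruct (finite-range covariance $c(u,v)=(2\pi)^2a_{-u,-v}$ from the trigonometric polynomial, Wick expansion with degrees-of-freedom counting over pairings, exponential moment growth for compact support, and the loss of one free index for connected pairings in the variance). The only points to tighten in a full write-up are minor: the identification of the moment limits with $\nu_f$ via \eqref{intro.eq3} needs the standard uniform-integrability/truncation step (which your moment bound supplies uniformly in $N$), and the bound $(4K)^m$ should really be of the form $(16K)^m$ once the $2^{2m}$ terms from expanding the symmetrization $G_{i,j}+G_{j,i}$ are counted --- neither affects the conclusion.
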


\section*{Acknowledgement} The authors are grateful to Manjunath Krishnapur for helpful discussions and an anonymous referee for constructive comments.


\end{document}